\documentclass[letterpaper, 10 pt, conference]{ieeeconf}
\IEEEoverridecommandlockouts 

\newcounter{hugeEquationNumber}
\setcounter{hugeEquationNumber}{8}
\usepackage{amsmath}
\usepackage{graphicx}
\usepackage{amssymb}
\usepackage{amsmath}
\usepackage{epstopdf}
\DeclareGraphicsRule{.tif}{png}{.png}{`convert #1 `dirname #1`/`basename #1 .tif`.png}
\usepackage{flushend,calc}

\usepackage{xcolor,calc}



\usepackage{parskip}
\makeatletter
\def\thm@space@setup{%
  \thm@preskip=\parskip \thm@postskip=0pt
}\makeatother


\newcommand{\norm}[1]{\lVert#1\rVert}
\renewcommand{\t}{^{T}}
\newcommand{\eqdef}{:=}
\newcommand{\mcf}{\mathcal}

\newcommand{\Hinf}{{\mathcal{H}_\infty}}
\newcommand{\inv}{^{-1}}

\newcommand{\Uset}{\mathbb{U}}
\newcommand{\Wset}{\mathbb{W}}
\newcommand{\Wsetinf}{\mathcal{W}}

\newtheorem{claim}{Claim}
\newtheorem{thm}{Theorem}

\newtheorem{lemma}{Lemma}
\newtheorem{remark}{Remark}

\title{Performance Bounds for Constrained Linear Min-Max Control}
\author{Tyler H. Summers$^*$ and Paul J. Goulart$^*$%
\thanks{T. H. Summers is supported by an ETH Zurich Postdoctoral Fellowship.}%
\thanks{$^*$Automatic Control Laboratory, Department of Information Technology and Electrical Engineering, ETH Zurich, 8092 Zurich, Switzerland. {\tt\small tsummers|pgoulart@control.ee.ethz.ch}}%
}
\date{\today} 

\begin{document}
\maketitle

\begin{abstract}
This paper proposes a method to compute lower performance bounds for discrete-time infinite-horizon min-max control problems with input constraints and bounded disturbances. Such bounds can be used as a performance metric for control policies synthesized via suboptimal design techniques. Our approach is motivated by recent work on performance bounds for stochastic constrained optimal control problems using relaxations of the Bellman equation. The central idea of the paper is to find an unconstrained min-max control problem, with negatively weighted disturbances as in $\mathcal{H}_\infty$ control, that provides the tightest possible lower performance bound on the original problem of interest and whose value function is easily computed. The new method is demonstrated via a numerical example for a system with box constrained~input. 
\end{abstract}

\section{Introduction}

Consider the discrete-time linear time-invariant system
\begin{equation} \label{dynamics}
\begin{aligned}
x_{t+1} &= Ax_t + Bu_t + Gw_t \\
    z_t &= Cx_t + Du_t
\end{aligned}
\end{equation}
with state $x_t\in \mathbf{R}^n$, input $u_t\in\mathbf{R}^m$,  unknown exogenous disturbance $w\in \mathbf{R}^l$, and costed/controlled output $z_t \in \mathbf{R}^p$ at each time $t = \{0,1,\dots\}$.   We assume throughout that the initial state $x_0$ is known and that the state is directly observable at each time step.  
We further assume that the inputs are subject to a compact constraint $u_t \in \Uset$, that the disturbances are constrained to a compact set $w_t \in \Wset$, that the pairs $(A,B)$ and $(A,C)$ are controllable and observable, respectively, and that $D$ is full rank.   We denote by $\Wsetinf$ the set of all infinite-horizon disturbance sequences generated by~$\Wset$.


We define a stationary control policy $\pi: \mathbf{R}^n \rightarrow \Uset$ for the system \eqref{dynamics} as a decision rule mapping observed states to control inputs, i.e.\ $u_t = \pi(x_t)$.   We denote the set of all such stationary policies satisfying the system constraints for all infinite-horizon disturbance sequences by $\Pi$.   The linear min-max control problem is to design a control policy $\pi \in \Pi$ that minimizes the worse-case value of some objective function over all admissible disturbance sequences, starting from some known initial state.  

Given an initial state $x := x_0$, a control policy $\pi$ and a disturbance sequence $\mathbf{w}\in\Wsetinf$, we consider a cost function in the form
\begin{equation} \label{cost}
J(x_0,\pi,\mathbf{w}) = \sum_{t=0}^\infty \alpha^t \ell(z_t,w_t)
\end{equation}
where $\ell: \mathbf{R}^n \times \Uset \times \Wset \rightarrow \mathbf{R}$ is a cost function and $\alpha \in (0,1)$ is a discount factor. 
%
%
The optimal value function and policy are given by
\begin{equation} \label{optcost}
\begin{aligned}
V(x_0) &= \min_{\pi \in \Pi} \max_{w \in \Wsetinf} \sum_{t=0}^\infty \alpha^t \ell(z_t,w_t) \\
     \pi^*(x) &= \arg \min_{\pi \in \Pi} \max_{w \in \Wsetinf} \sum_{t=0}^\infty \alpha^t \ell(z_t,w_t)
\end{aligned}
\end{equation}

It is well known that, given the preceding assumptions and certain mild conditions on the stage cost $\ell$, a stationary optimal control policy $\pi^*$ exists \cite[Ch. 5]{BertsekasShreveBook}.
However, it is extremely difficult to compute the optimal cost and policy in general, because the optimization problem to be solved is over an infinite dimensional function space and over the worst case disturbance sequence. 

It was shown in \cite{mayne2006characterization} that for constrained finite-horizon problems with a cost convex and quadratic in $z$ and concave and quadratic in $w$, the optimal value function is piecewise quadratic and the optimal control policy piecewise affine over a polytopic partition of the state space, though computing and storing the policy is possible only for very small systems. Recent research has focused on developing tractable but suboptimal policies for systems in which the approach of \cite{mayne2006characterization} is intractable. Examples include various formulations of min-max model predictive control \cite{lall1994game,kothare1996robust,lofberg2003approximations,goulart2009control}. This immediately raises questions about the degree of  suboptimality of such policies.  To the best of our knowledge there is currently no general method to answer these questions for min-max problems. 

We therefore propose a method for computing a lower bound on the optimal cost for input constrained problems, which provides a way to bound the degree of suboptimality of any control policy constructed by any given design method. Our approach is motivated by recent work from Wang and Boyd \cite{wang2009performance}, who propose a similar approach to computing performance bounds for stochastic optimal control problems  (i.e.\ those with an expected value, rather than a min-max, cost). The central idea is to find an unconstrained linear min-max control problem for which the optimal cost can be efficiently computed that best approximates from below the optimal cost of the original constrained problem. 

In particular, we construct an auxiliary convex-concave quadratic cost function for the system \eqref{dynamics} without constraints, and exploit this auxiliary function to compute a lower bound on the value function $V$ of the constrained system.  The particular choice of a convex-concave auxiliary cost function is motivated by the fact that such cost functions appear naturally in $\mathcal{H}_\infty$ control design problems, for which exact performance value functions can be computed.  Our resulting bounds are not generic, but are instead computed individually for each problem instance. Such bounds can be useful in evaluating the performance of suboptimal policies in cases where the optimal policy cannot be efficiently computed: when the gap between the lower bound and the performance of a suboptimal policy is small, one can conclude that the suboptimal policy is nearly optimal. 

%
\begin{figure*}[t]
\newcounter{tempMathCtr}
\setcounter{tempMathCtr}{\value{equation}}
\setcounter{equation}{\value{hugeEquationNumber} - 1}
\begin{equation} \label{relaxation4}
\small
\begin{aligned}
& \text{minimize} \quad \mathbf{Tr}(P) \\
& \text{subject to} \quad P \succ 0, \quad X \succ 0 \\ 
& \left[\begin{array}{cccc}N\t(AXA\t-X) & N\t AX & N\t (AXA\t-X)M & N\t G \\XA\t N & \gamma Q\inv & XA\t M & 0 \\M\t(AXA\t-X)N & M\t AX &  M\t(AXA\t - X -\gamma B R^{-1}B\t)M & M\t G\\
G\t N & 0 & G\t M&-\gamma I \end{array}\right]\prec 0 \\ & -\gamma I + G\t P G \succ 0,\quad  \left[\begin{array}{cc}X & I \\I & P\end{array}\right] \succeq 0.
\end{aligned}
\tag{$\mathcal{SDP}$}
\end{equation}
\hrulefill
\vspace*{4pt}
\setcounter{equation}{\value{tempMathCtr}}
\end{figure*}
%
%

The paper is organized as follows. Section \ref{sec:II} briefly reviews solution methods for unconstrained min-max problems, which are closely related to the $\Hinf$ control problem. Section \ref{sec:III} describes the method for computing the proposed bound. Section \ref{sec:IV} provides a numerical example for which the bound is computed. Finally, Section \ref{sec:V} gives concluding remarks.

%

\section{\!\!Unconstrained Problems with Quadratic Cost\!}\label{sec:II}
%
%
%
%
This section reviews solution methods for unconstrained min-max problems whose objective functions are convex-concave in $(z,w)$, which will be used to compute lower bounds for input constrained problems in Section \ref{sec:III}. If the stage cost for some unconstrained problem is less than or equal to the stage cost for the constrained problem, then we will show that the unconstrained optimal cost is a lower bound on the constrained optimal cost for a certain set of initial states. One can then optimize the parameters in the stage cost of the unconstrained problem to find tighter lower bounds for the original constrained problem. 

We begin by recalling that a special case, closely related to the $\Hinf$ optimal control problem, in which the min-max optimal policy can be efficiently computed is when the stage cost is convex and quadratic in $(x,u)$ and concave and quadratic in $w$, with discount factor $\alpha = 1$, i.e.\
\begin{equation}
 \label{stage}
\bar{J} = \sum_{t=0}^\infty \|Cx_t + Du_t\|^2 - \gamma^2 \|w_t\|^2, \quad \gamma >0
\end{equation}
and there are no constraints, i.e.\ when $\Uset=\mathbf{R}^m$ and $\Wset=\mathbf{R}^l$. In this case, the optimal value function is quadratic in the initial state, the optimal state feedback policy is linear, and the closed-loop system has $\ell_2$ gain bounded by $\gamma$, whenever $\gamma$ is larger than the $\Hinf$ optimal gain. Furthermore, the optimal cost and feedback gain can be computed efficiently from the problem data using a variety of well-known techniques, see e.g.\ \cite{de1992discrete,gahinet1994linear,basar2004h}. 

One standard solution method for this problem is based on dynamic programming recursion using the Isaacs equation, which is the discrete-time dynamic game counterpart to the Bellman equation from dynamic programming \cite{basar2004h}. Suppose that $(P,\bar P) \in \mathbf{R}^{n\times n}$  are symmetric matrices that satisfy the generalized algebraic Riccati equation
\begin{subequations}\label{eqn:dynProg}
\begin{align}
P&= Q + A^T\bar PA - A^T\bar PB(R + B^T\bar PB)^{-1}B^T\bar PA \label{eqn:dynProg:P}\\
\bar P&\eqdef  P + PG(\gamma^2I - G^T PG)^{-1}G^T P, \label{eqn:dynProg:Pbar}
\end{align}
\end{subequations}
where we have assumed for simplicity that $Q = C\t C > 0$, $R = D\t D > 0$, $C\t D = 0$ and $D\t C = 0$.
If $P\succ 0$ is a solution to \eqref{eqn:dynProg}, then $V(x_0)= x_0\t P x_0$ is the optimal value function, and the optimal controller and disturbance policies are $u = Kx$ and $w = K_wx$, respectively, where 
\begin{equation}\label{eqn:Kdef}
\begin{aligned}
K &= -(R + B\t\bar PB)^{-1}B\t \bar PA \\
K_w &=(\gamma^2I - G\t P G)^{-1}G\t P (A + BK). 
\end{aligned}
\end{equation}
For $\alpha \in (0,1)$ the optimal cost and policies have the same structure but with some problem data scaled by the discount factor: $A \rightarrow \sqrt{\alpha}A$, $B \rightarrow \sqrt{\alpha}B$ and $\gamma \rightarrow \gamma/\sqrt{\alpha}$ (see the Appendix). In other words, the optimal cost and controllers for the discounted problem can be computed by solving an associated undiscounted problem. 

Alternative solution methods involve relaxing the Riccati equation \eqref{eqn:dynProg:P} to an inequality and solving a semidefinite programming problem.  If \eqref{eqn:dynProg:P} is relaxed to the right (i.e\ by replacing $=$ with $\succeq$), it can be shown that the resulting inequality is equivalent to the linear matrix inequality obtained by applying the Bounded Real Lemma to the system \eqref{dynamics} in close-loop with the optimal controllers $u_t = Kx_t$ and $w_t = K_wx_t$ \cite{gahinet1994linear,de1992discrete} (see  Appendix). The optimal cost and state feedback gain matrices are given by the solution to the following optimization problem in variables $P$ and $K$:
\begin{equation} \label{relaxation2}
\begin{aligned}
& \text{minimize} \quad  \mathbf{Tr}(P) \\
& \text{subject to} \quad P \succ 0 \\
& \left[\begin{array}{cccc}-P\inv & A+BK & G & 0 \\(A+BK)\t & -P & 0 & (C+DK)\t \\G\t & 0 & -\gamma I & 0 \\0 & C+DK & 0 & -\gamma I \end{array}\right]\prec 0. 
\end{aligned}
\end{equation}
The usual approach to solving \eqref{relaxation2} is to transform this problem into a semidefinite program in variables $P\inv$ and $KP\inv$.  However, in the next section we will allow $D$ to vary in order to obtain the tightest possible lower bound for the problem \eqref{optcost}, so an alternative to \eqref{relaxation2} is preferred.  As shown in \cite{gahinet1994linear}, the optimal cost can also be computed separately from the optimal state feedback via the semidefinite program \eqref{relaxation4} in variables $P$ and $X$.
%
%
%
%
%
%
%
The matrices $N$ and $M$ in \eqref{relaxation4} are bases for the null space of $B\t$ and its orthogonal complement, respectively. Note that in contrast to \eqref{relaxation2},  the problem \eqref{relaxation4} is affine in $R\inv$.  This property will be useful when computing lower bounds for the original problem  \eqref{optcost}.

\begin{remark}
The problem \eqref{relaxation4} is in contrast with the computation of performance bounds for constrained linear stochastic control problems in \cite{wang2009performance}, in which bounds were obtained by relaxing the standard linear quadratic regulator Riccati equation in the other direction and maximizing over the resulting linear matrix inequality. If \eqref{eqn:dynProg:P} is relaxed to the left (i.e.\ by replacing $=$ with $\preceq$), one obtains the following optimization problem in variables $P$ and $\bar P$
\begin{equation} \label{relaxation1}
\begin{aligned}
& \text{maximize} && \mathbf{Tr}(P) \\
& \text{subject to} && \left[\begin{array}{cc}R+B^T\bar P B & B^T\bar PA \\A^T\bar P B & Q+A^T\bar P A- P \end{array}\right] \succeq 0 \\ & && \bar P = P + PG(\gamma^2I - G^T PG)^{-1}G^T P \\ & && P \succeq 0.
\end{aligned}
\end{equation}
Unfortunately, this problem is not convex. For stochastic linear quadratic regulator problems considered in \cite{wang2009performance} (recovered with $\gamma \rightarrow \infty$ and thus $\bar P \rightarrow P$), the situation is easier since the problem becomes a semidefinite program in $P$ and is also concave in $(Q,R)$.
\end{remark}

\section{Performance Bound for Problems with Input Constraints}\label{sec:III}
This section describes a method to obtain lower bounds on performance for min-max problems with input constraints. The central idea is to find an unconstrained problem	 whose optimal cost, which as shown in the previous section can be efficiently computed, is a lower bound on the optimal cost of the constrained problem. To this end, we associate $Q\succ 0$, $R\succ 0$, $\gamma>0$ with the stage cost of some unconstrained problem and denote by $P$, $K$ and $K_w$ the corresponding optimal cost and gain matrices given by \eqref{eqn:dynProg}--\eqref{eqn:Kdef}. The closed-loop system for this unconstrained problem is $x^+ = A_{cl} x$ where $A_{cl} = A+BK+GK_w$. We also define the set 
\begin{equation}\label{eqn:nastyX}
\!X \!=\! \{ x\in \mathbf{R}^n | K_w (A+BK+GK_w)^k x \in \Wset, \, \forall k\in \mathbb{N} \},\!\!\!
\end{equation}
which corresponds to the set of states for which the optimal unconstrained disturbance sequence always remains in $\Wset$. We have the following result, which gives a basic lower performance bound for initial states in $X$:
\begin{lemma}
Suppose $Q$, $R$, $\gamma$ and $s \in \mathbf{R}$ satisfy
\begin{equation} \label{semiinf}
\begin{aligned}
x^T Q x + u^T R u - \gamma^2 w^T w + s  \leq \ell(z,w) ,\\  \forall x\in \mathbf{R}^n,\ \forall u\in \Uset, \ \forall w\in \Wset.
\end{aligned}
\end{equation}
Then we have for $\alpha \in (0,1)$
\begin{equation}\label{eqn:infSumBound}
x_0\t P x_0 +\frac{s}{1-\alpha} \leq V(x_0), \quad \forall x_0 \in X
\end{equation}
where $P$ is the optimal cost matrix for the unconstrained min-max problem associated with $Q$, $R$, and $\gamma$.
\end{lemma}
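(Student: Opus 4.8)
The plan is to combine the pointwise stage-cost inequality \eqref{semiinf} with the saddle-point structure of the unconstrained quadratic game, using the set $X$ to guarantee that the worst-case unconstrained disturbance remains admissible. First I would fix any admissible policy $\pi \in \Pi$ and any disturbance sequence $\mathbf{w} \in \Wsetinf$, so that along the resulting trajectory $u_t = \pi(x_t) \in \Uset$ and $w_t \in \Wset$ at every step. Applying \eqref{semiinf} stage-by-stage, multiplying by $\alpha^t$, and summing the geometric series $\sum_{t} \alpha^t s = s/(1-\alpha)$ would give $\bar J(x_0,\pi,\mathbf w) + s/(1-\alpha) \le J(x_0,\pi,\mathbf w)$, where $\bar J$ denotes the quadratic cost $\sum_{t} \alpha^t (x_t\t Q x_t + u_t\t R u_t - \gamma^2 w_t\t w_t)$ and $J$ is the cost \eqref{cost}. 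This reduces the claim to showing that the constrained worst-case quadratic cost dominates the unconstrained optimal value $x_0\t P x_0$.

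The main obstacle is the disturbance direction: passing from the constrained game to the unconstrained one enlarges the control space, which lowers the value as desired, but also enlarges the disturbance space, which raises the value and so works against a lower bound. This is exactly where $X$ enters. For $x_0 \in X$ I would introduce the open-loop disturbance sequence $w_t^* := K_w A_{cl}^t x_0$ with $A_{cl} = A + BK + GK_w$ as in \eqref{eqn:nastyX}; by the definition of $X$ this sequence lies in $\Wsetinf$, so it is a legitimate adversarial choice in the constrained game. Since it is feasible, for every $\pi \in \Pi$ we obtain $\max_{\mathbf w \in \Wsetinf} J(x_0,\pi,\mathbf w) \ge J(x_0,\pi,\mathbf w^*) \ge \bar J(x_0,\pi,\mathbf w^*) + s/(1-\alpha)$, using the trajectory inequality of the previous paragraph for the particular pair $(\pi,\mathbf w^*)$.

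It then remains to show $\bar J(x_0,\pi,\mathbf w^*) \ge x_0\t P x_0$ for every control policy. The key fact I would invoke is that the pair $(u_t = K x_t,\, w_t = K_w x_t)$ obtained from the Isaacs equation \eqref{eqn:dynProg}--\eqref{eqn:Kdef} is a saddle point of $\bar J$, whose disturbance component realized along the closed loop is precisely $\mathbf w^*$. The right-hand saddle inequality states that the optimal control $\mathbf u^*$ is a best response to the fixed disturbance $\mathbf w^*$, so $\min_{\mathbf u} \bar J(x_0,\mathbf u,\mathbf w^*) = \bar J(x_0,\mathbf u^*,\mathbf w^*) = x_0\t P x_0$, which gives $\bar J(x_0,\pi,\mathbf w^*) \ge x_0\t P x_0$ for all $\pi$. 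Chaining the inequalities and taking the minimum over $\pi \in \Pi$ would then yield $V(x_0) \ge x_0\t P x_0 + s/(1-\alpha)$ for every $x_0 \in X$, as claimed in \eqref{eqn:infSumBound}.

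The step I expect to require the most care is this last saddle-point claim, specifically that the feedback saddle point produced by dynamic programming is also an open-loop saddle point, so that minimizing the control against the frozen sequence $\mathbf w^*$ recovers $x_0\t P x_0$ exactly rather than merely bounding it. This relies on the convex-concave structure of $\bar J$ in $(\mathbf u, \mathbf w)$, namely convexity in $\mathbf u$ from $Q,R \succeq 0$ and concavity in $\mathbf w$ from the condition $\gamma^2 I - G\t P G \succ 0$, which justifies the existence of a saddle point via a minimax theorem and the interchange underlying the equality above. I would also verify the discount bookkeeping, namely that $P$, $K$, $K_w$ and hence $A_{cl}$ are those of the $\alpha$-discounted problem, so that $x_0\t P x_0$ is indeed the unconstrained optimal value appearing in \eqref{eqn:infSumBound}.
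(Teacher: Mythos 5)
Your proposal is correct and, at the skeleton level, it matches the paper's proof: both rest on (i) summing the stage-wise domination \eqref{semiinf} against the discount to produce the $s/(1-\alpha)$ term, and (ii) using the set $X$ to neutralize the disturbance constraint. The difference is in how step (ii) is executed. The paper argues over nested optimization problems: it drops the input constraint in the outer minimization (which can only decrease the value), then asserts that for $x_0\in X$ the inner maximization over $\Wsetinf$ may be replaced by an unconstrained maximization ``because the constraint is never active,'' after which the left-hand side is identified as $x_0\t Px_0 + s/(1-\alpha)$. You instead fix an arbitrary $\pi\in\Pi$, inject the single feasible open-loop sequence $w_t^*=K_wA_{cl}^tx_0$, and reduce the lemma to the explicit claim $\min_{\mathbf u}\bar J(x_0,\mathbf u,\mathbf w^*)=x_0\t Px_0$. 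This is a sharper accounting of the same idea: your claim is precisely the fact hidden inside the paper's ``never active'' sentence, because against an off-equilibrium policy the feedback law $K_wx$ would generate a disturbance sequence different from $\mathbf w^*$ (and possibly outside $\Wsetinf$), so the frozen sequence $\mathbf w^*$ is the only admissible adversary one can deploy, and one must show it holds even an anticipative controller to the full value $x_0\t Px_0$. You are right to flag this feedback-versus-open-loop distinction as the crux; the paper passes over it silently.

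One caution about how you propose to close that step. A minimax theorem applied to the open-loop pair game is not the right tool: at best it gives existence of a saddle \emph{value}, not the identification of $(\mathbf u^*,\mathbf w^*)$ as a saddle \emph{point}, and its hypothesis can actually fail --- for a \emph{fixed open-loop} $\mathbf u$, the map $\mathbf w\mapsto\bar J(x_0,\mathbf u,\mathbf w)$ need not be concave (if $\sqrt{\alpha}A$ is unstable, the open-loop response makes $\sum_t\alpha^t x_t\t Qx_t$ an unbounded quadratic form in $\mathbf w$, which $-\gamma^2\sum_t\alpha^t\norm{w_t}^2$ cannot dominate); concavity in $\mathbf w$ is a closed-loop property here. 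Fortunately, concavity in $\mathbf w$ is not needed for the one inequality you use. The clean finish is completion of squares with the Riccati solution: the Isaacs identity shows that $\bar J(x_0,\mathbf u,\mathbf w^*)-x_0\t Px_0$ equals a discounted sum of two squared residuals per stage --- one in $u_t-Kx_t$ with positive definite weight, minus one in the gap between $w_t^*$ and the maximizing disturbance for the current $(x_t,u_t)$ --- and both residuals vanish identically along $\mathbf u=\mathbf u^*$, so the difference is a homogeneous quadratic in $\mathbf u-\mathbf u^*$ with no linear term. Since $\bar J(x_0,\cdot,\mathbf w^*)$ is convex in $\mathbf u$ (only $Q\succ0$, $R\succ0$ are needed, with $\mathbf w^*$ held fixed), this homogeneous quadratic must be positive semidefinite, hence nonnegative, giving $\bar J(x_0,\mathbf u,\mathbf w^*)\ge x_0\t Px_0$ for every $\mathbf u$ of finite cost (controls of infinite cost are trivial since $Q\succ 0$), and therefore for every $\pi\in\Pi$. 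With that substitution your argument is complete and is, if anything, more rigorous than the paper's own treatment of this step.
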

\begin{proof} Observe that \eqref{semiinf} implies 
\begin{equation} \label{costineq}
\begin{aligned}
\min_{\pi \in \Pi} &\max_{w\in \Wsetinf} \sum_{t=0}^{\infty}\alpha^t [x_t^T Q x_t + u_t^T R u_t - \gamma^2 w_t^T w_t +s] \\ &\leq \min_{\pi \in \Pi} \max_{w\in \Wsetinf}  \sum_{t=0}^{\infty}\alpha^t l(z,w).
\end{aligned}
\end{equation}
The inequality \eqref{costineq}  remains valid if the input constraint in the set of admissible policies on the outer left-hand-side minimization is dropped. If a given $x_0$ is in the set $X$ then the constraint in the inner left-hand-side maximization over disturbance sequences can be dropped without changing the value because the constraint is never active by the definition of $X$. The term involving $s$ in the left-hand-side of \eqref{costineq} can be taken out of the optimizations and evaluated to yield the second term on the left-hand-size of \eqref{eqn:infSumBound}. The rest of the left-hand side is then the unconstrained optimal min-max cost and the right-hand-side is the optimal cost of the constrained problem, and the result follows immediately. 
\end{proof}

\subsection{Optimizing the bound}
Lemma 1 gives a basic but efficiently computable lower bound on the value function $V$ for our constrained minimax problem. We next consider the problem of maximizing this lower bound by allowing the parameter $R\inv$ in \eqref{relaxation4} to vary\footnote{One could also contemplate allowing $Q\inv$ to vary. Here, since we do not consider state constraints, \eqref{semiinf} must hold on all of $\mathbf{R}^n$, and one does not gain anything by allowing $Q\inv$ to vary. Therefore, as in \cite{wang2009performance}, we fix $Q$ to the cost for the constrained problem.  }. Let $J^*(R\inv)$ denote the optimal value of \eqref{relaxation4}. To optimize the lower bound, we can solve the following optimization problem over the parameters $R\inv$ and $s$ 
\begin{equation} \label{opt1}
\begin{aligned}
& \text{maximize} && J^*(R\inv) + \frac{s}{1-\alpha} \\
& \text{subject to} && \eqref{semiinf}, \quad R\inv \succ 0.
\end{aligned}
\end{equation}
This problem presents two difficulties. First, since \eqref{relaxation4} is a minimization problem and \eqref{opt1} is a maximization problem, the optimization cannot be done jointly over $P$, $X$, $R\inv$ and~$s$. We can however form the dual of \eqref{relaxation4} and maximize jointly over $R\inv$, $s$ and the dual variables. This results in the following bilinear problem with variables $R\inv$, $s$ and dual variables $Z$, $\Phi$ and $\Lambda$: 
\begin{equation} \label{dual2}
\begin{aligned}
& \text{max} && \hspace{-5ex}-\gamma \mathbf{Tr}(Z) + \mathbf{Tr}(F(R\inv)\Lambda) + 2\mathbf{Tr}(\Phi_{12}) + \frac{s}{1-\alpha} \\
& \text{subject to} && \Phi_{22} + I + G\t Z G = 0,\quad \eqref{semiinf},\quad R\inv \succ 0 \\ 
& && \Phi_{11}+A\t N \Lambda_{11} N\t A - N \Lambda_{11} N\t + 2A\t N \Lambda_{12} \\
& && \quad + 2A\t N \Lambda_{13} M\t A - 2N \Lambda_{13} M\t  + \Lambda_{22} \\
& && \quad + 2\Lambda_{23} M\t A + A\t M \Lambda_{33} M\t A \\
& && \quad - M\Lambda_{33} M\t =0 \\
& && Z \succeq 0, \quad \Phi = \left[\begin{array}{cc}\footnotesize\Phi_{11} & \Phi_{12} \\\Phi_{12}^T & \Phi_{22}\end{array}\right] \succeq 0 \\
& && \Lambda = \footnotesize\left[\begin{array}{cccc}\Lambda_{11} & \Lambda_{12} & \Lambda_{13} & \Lambda_{14} \\\Lambda_{12}^T & \Lambda_{22} & \Lambda_{23} & \Lambda_{24} \\\Lambda_{13}^T & \Lambda_{23}^T & \Lambda_{33} & \Lambda_{34} \\\Lambda_{14}\t & \Lambda_{24}\t & \Lambda_{34}\t & \Lambda_{44}\end{array}\right] \succeq 0.
\end{aligned}
\end{equation}
where $F(R\inv):=$
\begin{equation}  \footnotesize
\begin{aligned} &\left[\begin{array}{cccc}0 & 0 & 0 & N\t G \\0 & -\gamma Q\inv & 0 & 0 \\0 & 0 & -\gamma M\t B R\inv B\t M & M\t G \\G\t N & 0 & G\t M & -\gamma I\end{array}\right].
\end{aligned}\normalsize
\end{equation}
Observe that there is a product term involving $\Lambda_{33}$ and $R\inv$ in the dual objective, so this problem is bilinear, and consequently non-convex. However, local and global methods for solving such bilinear optimization problems (e.g. PENBMI \cite{kocvara2005penbmi}) are available. In the next section we adopt the simplest approach,  of alternately solving semidefinite programs by fixing one variable in the product and optimizing over the other. We show via numerical example that this method can be used to produce improved lower bounds.

The second difficulty is that \eqref{semiinf} is a semi-infinite constraint in general since it must hold for all points in $\mathbf{R}^n$, $\Uset$ and $\Wset$. As shown in \cite{wang2009performance}, we recall in the following subsections that when $\ell(z,w)$ is quadratic the constraint \eqref{semiinf} can be enforced exactly when $\Uset$ is a finite set, and that it can be replaced with a conservative approximation via the S-procedure in other cases.

\begin{remark}
The reason why the dual of \eqref{relaxation4} is preferred to the dual of \eqref{relaxation2} is that the latter has primal variables multiplying $D$ which we allow to vary in the dual problem. As a consequence, the associated dual problem has bilinear \emph{equality} constraints. Since $R\inv$ enters as a constant term in \eqref{relaxation4} , the dual problem has the bilinearity in the objective, which we find more convenient to work with.
\end{remark}

\subsection{Verifying valid initial states}
The bound \eqref{eqn:infSumBound} is valid only for initial states $x_0 \in X$, which is characterized by an infinite set of constraints in \eqref{eqn:nastyX}. We now describe a procedure to verify that a given $x_0$ is indeed contained in $X$. Suppose we have a symmetric matrix $S$ and scalar $\beta \in \mathbf{R}$ such that 
\mbox{$\bar \Wset = \{ w\in \mathcal{R}^l | w\t S w \leq \beta \} \subseteq \Wset$}. 
Suppose also that $x\t H x $ is a dissipated quantity for the optimal unconstrained closed-loop system $x^+ = A_{cl}x$, where $A_{cl} = A+BK+GK_w$ (i.e. $H$ satisfies $A_{cl}\t H A_{cl} - H \preceq 0$). To show $x_0 \in X$, it is sufficient to show
$$x\t H x \leq x_0 H x_0 \Rightarrow x\t K_w\t S K_w x \leq \beta. $$ 
By the S-procedure (see e.g. \cite{boyd1994linear}) this is equivalent to the existence of $\lambda \geq 0$ satisfying
$$K_w\t S K_w \preceq \lambda H, \quad -\beta \leq -\lambda x_0\t H x_0.$$
Thus, to verify $x_0 \in X$ it is sufficient to show that there exists $H$ and $1/\lambda$ satisfying the LMI conditions
\begin{equation} \label{verify}
\begin{aligned}
(1/\lambda)K_w\t S K_w \preceq H, \quad -\beta/\lambda \leq -x_0\t H x_0, \\ \quad A_{cl}\t H A_{cl} - H \preceq 0, \quad 1/\lambda > 0, \quad H \succeq 0.
\end{aligned}
\end{equation}
If a feasible point is found, the bound is valid for all initial states in the set $\{x\in \mathbf{R}^n | x\t H x \leq x_0\t H x_0 \}$. For a particular initial state, this procedure may be somewhat conservative. The conservatism can be reduced by evaluating a finite number of the constraints in $X$ and then solving the feasibility problem from the last state evaluated.

\subsection{Finite input sets}
Suppose that $\ell(z,w)$ is a quadratic function in the form 
$$\ell(z,w) = x\t Q_0 x + u\t R_0 u - \gamma_0^2 w\t w$$
(with $\gamma_0 = 0$ possible) and the input constraint set is finite ($\Uset = \{u_1,...,u_k\}$). If we fix $Q = Q_0$ and $\gamma \geq \gamma_0$, then \eqref{semiinf} reduces to 
\begin{equation}
u_i\t R u_i + \frac{s}{1-\alpha} \leq u_i\t R_0 u_i, \quad i = 1,...,k,
\end{equation}
which can be expressed using Schur complements as LMIs in $R\inv$ and $s$:
\begin{equation} \label{finite}
\begin{bmatrix}
\left(u_i\t R_0 u_i - \frac{s}{1-\alpha}\right) & u_i\t \\
u_i & R\inv
\end{bmatrix}
\succeq 0, \quad i = 1,...,k.
\end{equation}
Replacing \eqref{semiinf} in \eqref{dual2} with \eqref{finite} and solving the bilinear problem can yield an improved lower bound.

\subsection{S-Procedure relaxation}
Suppose again the stage costs are quadratic and now that we have $R_1,...,R_M$ and $s_1,...,s_M$ for which
\begin{equation} \label{quadineq}
\Uset  \subseteq \bar U = \{ u | u\t R_i u + s_i \leq 0, \ i = 1,...,M \}.
\end{equation}
Setting $Q = Q_0$ and $\gamma \geq \gamma_0$ again, a sufficient condition via the S-procedure (see \cite{boyd1994linear} and \cite{wang2009performance}) for \eqref{semiinf} to hold is the existence of nonnegative $\lambda_1,...,\lambda_M$ satisfying
\begin{equation}
R-R_0 \preceq \sum_{i=1}^M \lambda_i R_i, \quad \frac{s}{1-\alpha} \leq \sum_{i=1}^M \lambda_i s_i.
\end{equation}
The first inequality can be written using Schur complements as an LMI in $R\inv$
\begin{equation} \label{sproc}
\left[\begin{array}{cc}R_0 + \sum_{i=1}^M \lambda_i R_i & 0 \\0 & R\inv\end{array}\right] \succeq 0.
\end{equation}
Again, replacing \eqref{semiinf} in \eqref{dual2} with \eqref{sproc} and solving the bilinear problem can yield an improved lower bound.

\section{Numerical Example}\label{sec:IV}
In this section we compute a performance bound for an example system in which the input constraint set $\Uset$ is a box. The disturbance constraint set is taken to be a unit ball, and the cost is taken to be quadratic: $\ell(z,w) = x\t Q_0 x + u\t R_0 u - \gamma_0 w\t w$. System and cost matrices with dimensions $n=l=4$, $m=2$, and $p = 6$ were randomly generated, with entries from $A$, $B$, $G$, and Cholesky factors of $Q_0$ and $R_0$ drawn from a standard normal distribution. The disturbance weight $\gamma_0$ was set to ten percent larger than the unconstrained $\Hinf$ optimal value, and the discount factor was set to $\alpha = 0.95$. 

For the associated unconstrained problem, we set $Q = Q_0$ and $\gamma = \gamma_0$. The problem \eqref{relaxation4} is first solved with $R = R_0$ to obtain a basic lower bound. Then the bound is optimized by solving \eqref{dual2} by alternately fixing $R\inv$ and $\Lambda_{33}$. The corresponding semidefinite programs were solved using the modeling language YALMIP \cite{lofberg2004yalmip} and the cone solver SeDuMi \cite{sturm1999using}.

A box constraint is defined by $$\Uset = \{ u\in \mathcal{R}^m |\  \norm{u}_\infty \leq U_{max}\}.$$ This constraint can be represented by a set of quadratic inequalities as follows
$$u\t e_i e_i\t u  \leq U_{max}^2, \quad i=1,...,m, $$ where $e_i$ is the $i$th unit vector. In relation to \eqref{quadineq}, we have $R_i = e_i e_i\t$ and $s_i = -U_{max}^2$. 

We now describe a specific example. The system matrices are: \footnotesize
$$A = \left[\begin{array}{cccc}0.434 & 0.050 & 0.212 & 0.007 \\0.264 & 0.001 & 0.092 & 0.419 \\0.307 & 0.255 & 0.371 & 0.359 \\0.364 & 0.003 & 0.291 & 0.427\end{array}\right], \ B = \left[\begin{array}{cc}0.739 & 0.550 \\0.371 & 0.748 \\0.323 & 0.760 \\0.491 & 0.472\end{array}\right] $$
$$ G = \left[\begin{array}{cccc}0.802 & 0.666 & 0.737 & 0.629 \\0.471 & 0.677 & 0.866 & 0.793 \\0.203 & 0.9425 & 0.991 & 0.449 \\0.576 & 0.7701 & 0.504 & 0.524\end{array}\right], \ R_0 = \left[\begin{array}{cc}0.262 & 0.560 \\ * & 1.33\end{array}\right] $$
$$Q_0 = \left[\begin{array}{cccc}0.105 & 0.286 & 0.221 & 0.271 \\ * & 0.929 & 0.618 & 0.687 \\ * & * & 1.22 & 0.854 \\ * & * & * & 0.873\end{array}\right]. $$ \normalsize
Solving \eqref{relaxation4} with $Q = Q_0$, $R = R_0$ and $\gamma = \gamma_0$ we obtain an optimal value of 3.526. We then run an alternating semidefinite programming procedure on \eqref{dual2} and obtain the value 6.42, which is $82\%$ improvement on the basic bound obtained from \eqref{relaxation4}. Sets of initial states for which these bounds are valid can be computed via~\eqref{verify}.

\section{Conclusions}\label{sec:V}
This paper developed a method to compute performance bounds for infinite-horizon linear min-max control problems with input constraints. The method requires a bilinear matrix inequality to be solved. We showed that improved bounds can be computed by solving semidefinite programs in an alternating fashion. Further research directions include computing bounds for problems with state constraints and comparison of the bound with suboptimal policies.

\vspace{-1ex}
\appendix{
\vspace{-1ex}
\subsection{Infinite-horizon discounted $\Hinf$ problem}
\vspace{-1ex}

This section derives the solution of the infinite-horizon unconstrained linear min-max optimal control problem with a discounting factor. The quadratic form $z\t Q z$ is denoted by $\norm{z}_Q^2$.
Given the system
\[
x^+ = Ax + Bu + Gw,
\]
consider the problem of minimizing the worst-case discounted cost function with discount factor $\alpha \in (0,1)$
\[
J(x_0,\pi,\mathbf{w}) \eqdef \sum_{k=0}^\infty \alpha^k (\norm{x}_Q^2 + \norm{u}^2_R - \gamma^2\norm{w}^2 ).
\]
The Isaacs equation is
\[
\begin{aligned}
V &= \min_u\max_w
\left[\ell(x,u,w) + \alpha V(Ax+Bu+Gw)\right], \\
&= \min_u
\left[\norm{x}^2_Q + \norm{u}_R^2 + J(x,u)\right]
\end{aligned}
\]
where
\[
J(x,u) \eqdef \max_w\left[ -(\gamma^2)\norm{w}^2 + \alpha V(Ax+Bu+Gw)\right].
\]
Assume that $V(x) =  x\t Px$.  The inner maximization problem can be solved explicitly by rewriting it first as
\[
\begin{aligned}
&J(x,u) \\
&= \max_w\left[ -\gamma^2\norm{w}^2 +\alpha \norm{Ax+Bu+Gw}_P^2  \right] \\
&=\max_w\left[-\norm{w}^2_{(\gamma^2I-G\t(\alpha P)G)} + w\t G\t(\alpha P)(Ax+Bu)\right] \\  
&\quad + \norm{Ax+Bu}_{\alpha P}^2 
\end{aligned}
\]
The optimizer $w^*(x,u)$ for the above is
\[
w^*(x,u) = \left[\gamma^2I-G\t(\alpha P)G\right]^{-1}G\t(\alpha P)(Ax+Bu),
\]
so that
\begin{align*}
J(x,u) &= (Ax+Bu) \bar P(Ax+Bu)\\
\bar P &\eqdef (\alpha P) + (\alpha P)G\left(\gamma^2I-G\t(\alpha P)G\right)^{-1}G\t (\alpha P).
\end{align*}
The value function can then be written as 
\[
\begin{aligned}
V(x) &= \min_u\left[\norm{x}^2_Q + \norm{u}^2_Q + J(x,u)\right]\\
       &= \min_u\left[\norm{x}^2_Q + \norm{u}^2_Q + (Ax+Bu)\t \bar P(Ax+Bu)\right].
\end{aligned}
\]
The rest of the argument then follows along the lines of those in \cite{basar2004h}, yielding
\begin{align}
\!\!P &= Q + A\t \bar P A - A\t\bar PB(R+B\t\bar PB)^{-1}B\t \bar P A\label{eqn:discP}\\
\!\!\bar P &\eqdef (\alpha P) + (\alpha P)G\left(\gamma^2I-G\t(\alpha P)G\right)^{-1}G\t (\alpha P).\label{eqn:discPbar}
\end{align}
The optimal input and disturbance policies are
\[
\begin{aligned}
K_u &= -(R+B\t \bar P B)^{-1}B\t \bar PA, \\
K_w &= (\gamma^2I-G\t(\alpha P)G)^{-1}G\t(\alpha P)(A+BK_u).
\end{aligned}
\]

We can now try to equate a solution of the above to some other undiscounted problem.    Rewrite \eqref{eqn:discPbar} as
\[
\begin{aligned}
\bar P &= (\alpha P) + \frac{1}{\alpha}(\alpha P)G\left(\frac{\gamma^2}{\alpha}I-G\t PG\right)^{-1}G\t (\alpha P)\\
&= \alpha \cdot \left[P + PG\left(\tilde{\gamma}^2I-G\t PG\right)^{-1}G\t P\right]
= \alpha \cdot \bar{\mcf{P}}
\end{aligned}
\]
where $\tilde\gamma \eqdef \gamma/\sqrt{\alpha}$ and 
\[
\bar{\mcf{P}} \eqdef P + PG\left(\tilde{\gamma}^2I-G\t PG\right)^{-1}G\t P.
\]
Then substitute $\bar P = \alpha\bar{\mcf{P}}$ into \eqref{eqn:discP} to obtain
\[
\begin{aligned}
\!P \!&=\! Q \!+\! A\t (\alpha\bar{\mcf{P}}) A \!-\!\! A\t(\alpha\bar{\mcf{P}})B(R\!+\!B\t(\alpha\bar{\mcf{P}})B)^{\!-1\!}B\t (\alpha\bar{\mcf{P}}) A \\
&= Q + \tilde A\t \bar{\mcf{P}} \tilde A - \tilde A\t\bar{\mcf{P}}\tilde B(R+\tilde B\t \bar{\mcf{P}}\tilde B)^{-1}\tilde B\t \bar{\mcf{P}} \tilde A
\end{aligned}
\]
where $\tilde A \eqdef \sqrt{\alpha}A$ and $\tilde B \eqdef \sqrt{\alpha}B$.
The conclusion is that the solution to the discounted $\Hinf$ problem can be obtained by solving the equations
\[
\begin{aligned}
\bar{\mcf{P}} &= P + PG\left(\tilde{\gamma}^2I-G\t PG\right)^{-1}G\t P\\
P &= Q + \tilde A\t \bar{\mcf{P}} \tilde A - \tilde A\t\bar{\mcf{P}}\tilde B(R+\tilde B\t \bar{\mcf{P}}\tilde B)^{-1}\tilde B\t \bar{\mcf{P}} \tilde A
\end{aligned}
\]
i.e.\ by solving an undiscounted problem using the problem data $\tilde A \leftarrow \sqrt{\alpha}A$, $\tilde B \leftarrow \sqrt{\alpha}B$ and $\tilde\gamma \leftarrow \gamma/\sqrt{\alpha}$.  

The value function for the discounted problem is then
$
V(x) = x\t Px,
$
where $P$ comes from the solution to the undiscounted $\Hinf$ problem using the modified problem data.
\begin{remark}
We note that the control that minimizes the infinite-horizon discounted cost is not necessarily stabilizing for the system \eqref{dynamics}, even if the optimal cost is finite. However, only the cost is used when determining a lower bound.
\end{remark}

\vspace{-1ex}
\subsection{Equivalence of Riccati Inequality and Bounded-Real LMI}
\vspace{-1ex}

This section demonstrates the equivalence between the Riccati inequality obtained by replacing $=$ with $\succeq$ in \eqref{eqn:dynProg} and the bounded-real LMI in \eqref{relaxation2}. We first recall a result from \cite{de1992discrete}.
\begin{thm}[{Thm. 2.2,\ \cite{de1992discrete}}]\label{thm:deSouza}
For the system
\begin{equation}\label{eqn:systemNoInput}
\begin{aligned}
x^+ &= Ax + Gw \\
z   &= Cx,
\end{aligned}
\end{equation}
the following statements are equivalent:
\begin{enumerate}
\item $A$ is a stable matrix and $\|C(zI-A)^{-1}G\|_\infty \le \gamma$.
\item There exists a stabilizing solution $P = P\t \succeq 0$ to the Riccati equation
\end{enumerate}
\begin{equation}\label{eqn:Riccati}
\!P\!=\!A\t \!PA \!+\! \frac{1}{\gamma^{\smash{2}}}A\t \!PG (I \!-\! \frac{1}{\gamma^{\smash{2}}}G\t\! P G )^{-1}G\t PA + C\t \!C.\! \hspace{-1.25ex}
\end{equation}
\end{thm}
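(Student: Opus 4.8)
This is the discrete-time bounded real lemma, and the plan is to establish the two implications separately: (2)$\,\Rightarrow\,$(1) by a dissipativity (storage-function) argument, and (1)$\,\Rightarrow\,$(2) by a frequency-domain and symplectic-pencil argument.

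For (2)$\,\Rightarrow\,$(1), I would take the candidate storage function $V(x)=x\t P x\ge 0$ and show it certifies the $\ell_2$ gain. The key step is to complete the square in $w$ for
\[
(Ax+Gw)\t P(Ax+Gw)-x\t P x-\gamma^2 w\t w + x\t C\t C x,
\]
whose maximizer (using $\gamma^2 I - G\t P G \succ 0$) is $w^\star = \frac{1}{\gamma^2}(I-\frac{1}{\gamma^2}G\t P G)\inv G\t P A x$; substituting $w^\star$ and invoking \eqref{eqn:Riccati} shows this maximum is exactly zero. Hence $V(x^+)-V(x)\le \gamma^2 w\t w - z\t z$ for every $w$. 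Summing from $t=0$ to $N$ with $x_0=0$ and using $V\succeq 0$ gives $\sum_{t=0}^N\norm{z_t}^2\le\gamma^2\sum_{t=0}^N\norm{w_t}^2$, which is the time-domain form of $\norm{C(zI-A)\inv G}_\infty\le\gamma$. Setting $w=0$ yields $V(x^+)-V(x)\le-\norm{Cx}^2$, so $\sum_t\norm{CA^t x_0}^2\le V(x_0)<\infty$; were $A$ to have an eigenvalue of modulus $\ge 1$, its eigenvector would lie in the unobservable subspace, contradicting observability of $(A,C)$, so $A$ is stable.

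For (1)$\,\Rightarrow\,$(2), I would rewrite the norm bound as the frequency-domain inequality $\gamma^2 I - T(e^{j\omega})^* T(e^{j\omega})\succeq 0$ for all $\omega$, with $T(z)=C(zI-A)\inv G$. The plan is then to associate with \eqref{eqn:Riccati} its symplectic matrix pencil, whose unit-circle eigenvalues occur exactly at the frequencies where $\gamma^2 I - T(e^{j\omega})^* T(e^{j\omega})$ becomes singular. Under a strict bound there are none, so the pencil splits cleanly into stable and antistable parts; writing the $n$-dimensional stable deflating subspace as the range of $\begin{bmatrix} I \\ P \end{bmatrix}$ produces a symmetric stabilizing $P$ solving \eqref{eqn:Riccati}, and running the dissipativity computation in reverse identifies $x_0\t P x_0$ with the worst-case energy $\max_{w}\sum_t(\norm{z_t}^2-\gamma^2\norm{w_t}^2)$, which is nonnegative (take $w=0$), giving $P\succeq 0$.

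I expect the main obstacle to be the nonstrict boundary case $\norm{\,\cdot\,}_\infty=\gamma$ in (1)$\,\Rightarrow\,$(2): here the symplectic pencil may carry unit-circle eigenvalues, the stable subspace is no longer unambiguous, and one must still produce a stabilizing semidefinite solution. I would handle this by a limiting argument, solving the strict problem at level $\gamma+\epsilon$ to obtain stabilizing $P_\epsilon\succeq 0$, establishing monotonicity and a uniform bound in $\epsilon$, and passing to $\epsilon\downarrow 0$; verifying that the limit does not forfeit the stabilizing property is the genuinely delicate point where most of the technical effort lies.
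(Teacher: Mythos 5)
First, a point of comparison you could not have known: the paper does \emph{not} prove this theorem. It is recalled verbatim as Theorem~2.2 of \cite{de1992discrete} ("We first recall a result from~\cite{de1992discrete}") and then used as a black box to establish the subsequent Claim relating the Riccati equation \eqref{eqn:dynProg} to the bounded-real LMI \eqref{relaxation2}. So there is no in-paper proof to measure your sketch against; it must stand on its own. On its own terms, your outline is the standard proof of the discrete-time bounded real lemma, and both halves are structurally sound for the \emph{strict}-inequality version: the completion of squares in $w$ (valid once $\gamma^2 I - G\t P G \succ 0$, which is implicit in "stabilizing solution") and the summed dissipation inequality correctly give the $\ell_2$-gain bound, and the symplectic-pencil / stable-deflating-subspace construction is the classical route for the converse. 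One small caveat: your argument that $A$ is stable invokes observability of $(A,C)$, which is a standing assumption of this paper but not a hypothesis of the theorem as displayed; the self-contained route is to deduce stability of $A$ from the stabilizing property of $P$ itself.

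The genuine gap is exactly where you located it, but it is worse than "delicate": your limiting argument cannot close the nonstrict case, because with "$\le \gamma$" the equivalence is \emph{false} under the usual meaning of a stabilizing solution (which requires $\gamma^2 I - G\t P G \succ 0$ so that the worst-case disturbance feedback, and hence the closed loop whose stability is being asserted, is even defined). Take $n=l=p=1$, $A=0$, $C=G=1$, $\gamma=1$: then $A$ is stable and $\|C(zI-A)\inv G\|_\infty = \|1/z\|_\infty = 1 \le \gamma$, so statement (1) holds; but $A=0$ forces $P = C\t C = 1$ in \eqref{eqn:Riccati}, whence $\gamma^2 - G\t P G = 0$ and no stabilizing solution exists, so statement (2) fails. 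Your scheme produces $P_\epsilon \equiv 1$ at level $\gamma+\epsilon$ for every $\epsilon>0$, and the limit loses precisely the strict positivity that "stabilizing" requires --- this is the failure mode you feared, and no amount of technical effort repairs it, because the target statement is not true. The resolution is that the original de Souza--Xie theorem is a strict-inequality statement: condition (1) reads $\|C(zI-A)\inv G\|_\infty < \gamma$ and condition (2) includes $\gamma^2 I - G\t P G \succ 0$; the "$\le$" in the paper's transcription should be read as "$<$". With the strict statement, your pencil argument in (1)$\Rightarrow$(2) suffices as-is (no unit-circle eigenvalues, hence no limiting argument is needed at all), while in (2)$\Rightarrow$(1) you should note that the plain dissipation inequality only yields gain $\le \gamma$; recovering the strict bound requires one more step, e.g.\ the spectral-factorization identity $\gamma^2 I - T(e^{j\omega})^* T(e^{j\omega}) = W(e^{j\omega})^*(\gamma^2 I - G\t P G)W(e^{j\omega})$ with $W$ boundedly invertible on the unit circle, which uses $\gamma^2 I - G\t P G \succ 0$ and stability of the closed loop.
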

\begin{claim}
If one substitutes $A \rightarrow (A+BK)$ and $C \rightarrow (C+DK)$ in \eqref{eqn:Riccati} with $K$ as in \eqref{eqn:Kdef}, then 
\eqref{eqn:Riccati}~$\Leftrightarrow$~\eqref{eqn:dynProg}.
\end{claim}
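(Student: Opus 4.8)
The plan is to treat the claim as a chain of exact algebraic identities, so that after the indicated substitution the Riccati equation \eqref{eqn:Riccati} collapses term-by-term onto \eqref{eqn:dynProg:P}, with the reversibility of each step delivering the two-way equivalence.

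First I would absorb the disturbance channel into $\bar P$. The key is the identity
\[
\frac{1}{\gamma^2}\left(I-\tfrac{1}{\gamma^2}G\t P G\right)^{-1} = (\gamma^2 I - G\t P G)^{-1},
\]
obtained by factoring $\tfrac{1}{\gamma^2}$ out of the bracketed matrix before inverting. This lets me combine the first two terms of \eqref{eqn:Riccati} as $A\t P A + \tfrac{1}{\gamma^2}A\t P G(I-\tfrac{1}{\gamma^2}G\t P G)^{-1}G\t P A = A\t \bar P A$, with $\bar P$ exactly as in \eqref{eqn:dynProg:Pbar}. Carrying out the substitutions $A\to(A+BK)$ and $C\to(C+DK)$ then reduces \eqref{eqn:Riccati} to the compact closed-loop form $P=(A+BK)\t\bar P(A+BK)+(C+DK)\t(C+DK)$.

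Next I would expand the two quadratic forms. Using the standing assumptions $C\t D=0$, $D\t C=0$, $Q=C\t C$ and $R=D\t D$, the output term simplifies to $(C+DK)\t(C+DK)=Q+K\t R K$, giving
\[
P = A\t \bar P A + A\t \bar P B K + K\t B\t \bar P A + K\t(R+B\t\bar P B)K + Q.
\]
The concluding step is the familiar completing-the-square computation: inserting the optimal gain $K=-(R+B\t\bar P B)^{-1}B\t\bar P A$ from \eqref{eqn:Kdef} makes the two cross terms and the $K\t(R+B\t\bar P B)K$ term collapse into a single expression, leaving
\[
P = Q + A\t\bar P A - A\t\bar P B(R+B\t\bar P B)^{-1}B\t\bar P A,
\]
which is exactly \eqref{eqn:dynProg:P}. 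Because every manipulation above is an identity rather than a one-sided bound, the same chain read backwards recovers \eqref{eqn:Riccati} from \eqref{eqn:dynProg}, establishing the equivalence.

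I expect the only genuine bookkeeping obstacle to be the cancellation in the last step: one must check that $A\t\bar P B K+K\t B\t\bar P A+K\t(R+B\t\bar P B)K$ reduces to $-A\t\bar P B(R+B\t\bar P B)^{-1}B\t\bar P A$ after the optimal $K$ is substituted, which relies only on $R+B\t\bar P B$ being symmetric and invertible. A minor point worth noting is that $K_w$ never enters this identity---only the input gain $K$ appears---since the disturbance term has already been folded into $\bar P$ at the first step.
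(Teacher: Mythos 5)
Your proposal is correct and follows essentially the same route as the paper's own proof: fold the disturbance term into $\bar P$ via \eqref{eqn:dynProg:Pbar} (the paper does the scalar factoring $\frac{1}{\gamma^2}(I-\frac{1}{\gamma^2}G\t PG)^{-1}=(\gamma^2 I-G\t PG)^{-1}$ implicitly, writing the middle term as $A\t(\bar P - P)A$), then substitute the closed-loop matrices, expand $(C+DK)\t(C+DK)=Q+K\t RK$ using the standing orthogonality assumptions, and complete the square with the optimal $K$. The only cosmetic difference is that you make the reversibility of each identity explicit to justify the ``$\Leftrightarrow$'', which the paper leaves implicit.
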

\begin{proof}
We have from \eqref{eqn:Riccati}  
\begin{align*}
P &= A\t PA  + A\t PG\left[\gamma^2 - G\t P G\right]^{-1}G\t PA + C\t C \\
&= A\t PA  + A\t (\bar P - P)A + C\t C = A\t \bar PA  + C\t C,
\end{align*}
where the first step uses the definition of $\bar P$ from \eqref{eqn:dynProg}.

Next replace $A \rightarrow (A+BK)$ and $C \rightarrow (C+DK)$ to get
\begin{align}
P &= (A+BK)\t \bar P(A+BK) + (C+DK)\t (C+DK) \notag\\
  &= (A+BK)\t \bar P(A+BK)  + Q + K\t R K \notag\\
  &= A\t \bar P A + Q + K\t (R + B\t \bar P B)K \notag \\ & \qquad + A\t \bar PBK + K\t B\t \bar P A\label{eqn:expandedK},
\end{align}
where the first step comes from the assumptions $D\t D = R$, $C\t C = Q$, and $C\t D = 0$.  Recalling the definition of $K$ in \eqref{eqn:Kdef}, the final three terms in \eqref{eqn:expandedK} can be rewritten as 
\begin{align}
K\t (R &+ B\t \bar P B)K + A\t \bar PBK + K\t B\t \bar P A \\ &= - A\t \bar PB(R + B\t \bar P B)^{-1}B\t \bar P A.
\end{align}
Substituting this into \eqref{eqn:expandedK} results in 
\[
P = A\t \bar P A + Q  - A\t \bar PB(R + B\t \bar P B)^{-1}B\t \bar P A,
\]
which is identical to \eqref{eqn:dynProg}.
\end{proof}
Finally, if we now relax \eqref{eqn:Riccati} by replacing  $=$ with $\succeq$, apply the Schur complement lemma three times and substitute $A \rightarrow (A+BK)$ and $C \rightarrow (C+DK)$, we arrive at the bounded-real LMI in \eqref{relaxation2}.

}

\vspace{-2ex}

{
\renewcommand{\baselinestretch}{0.91}
\bibliographystyle{unsrt}  
\bibliography{refspb}  
}

%
%
%
%
%
%
%
%

\end{document}